\newcommand{\TODO}[1][]{\textbf{\color{red}\ifthenelse{\isempty{#1}}{[TODO]}{[TODO: #1]}}}
\DeclareMathOperator{\rank}{\mathrm{rank}}
\DeclareMathOperator{\fchar}{\mathrm{char}}
\DeclareMathOperator*{\End}{\mathrm{End}}
\DeclareMathOperator*{\Aut}{\mathrm{Aut}}
\DeclareMathOperator{\GL}{GL}
\DeclareMathOperator{\Br}{Br}
\DeclareMathOperator{\Jac}{Jac}
\DeclareMathOperator{\Hom}{Hom}
\DeclareMathOperator{\Spec}{Spec}
\newcommand{\Z}{\mathbb{Z}}
\newcommand{\Q}{\mathbb{Q}}
\newcommand{\F}{\mathbb{F}}
\def\mkfancyprefix#1#2{%
  \@namedef{fancyref#1labelprefix}{#1}%
  \begingroup\def\x{\endgroup\frefformat{plain}}%
    \expandafter\x\csname fancyref#1labelprefix\endcsname
      {\MakeLowercase{#2}\fancyrefdefaultspacing##1}%
  \begingroup\def\x{\endgroup\Frefformat{plain}}%
    \expandafter\x\csname fancyref#1labelprefix\endcsname
      {#2\fancyrefdefaultspacing##1}%
}
\newtheorem{lemma}{Lemma}
\newtheorem{corollary}{Corollary}
\newtheorem{remark}{Remark}
\newtheorem{theorem}{Theorem}
\newtheorem{example}{Example}
\title{Genus 2 Curves in Small Characteristic}
\author{Lukas Zobernig}
\date{}
\begin{document}
\maketitle

\begin{abstract}
We study genus 2 curves over finite fields of small characteristic. The $p$-rank $f$ of a curve induces a stratification of the coarse moduli space $\mathcal{M}_2$ of genus 2 curves up to isomorphism. We are interested in the size of those strata for all $f \in \{0,1,2\}$. In characteristic 2 and 3, previous results show that the \emph{supersingular} $f=0$ stratum has size $q$. We show that for $q=3^r$, over $\F_q$ the \emph{non-ordinary} $f=1$ and \emph{ordinary} $f=2$ strata are of size $q(q-1)$ and $q^2(q-1)$, respectively. We give results found from computer calculations which suggest that these formulas hold for all $p \leq 7$ and break down for $p > 7$.
\end{abstract}

\section{Introduction}
\citet{maisner2007} studied supersingular genus 2 curves in characteristic 2. \citet{howe2008} showed that over an algebraically closed field $k$ characteristic 3, the supersingular locus $\mathcal{S}_2$ of the coarse moduli space $\mathcal{M}_2$ is in bijection with $k$ via certain invariants of genus 2 curves; we will discuss these invariants in \Fref{sec:moduli_and_invs}.

In \Fref{sec:theorems}, we prove a similar result for the non-ordinary stratum of $\mathcal{M}_2$ in characteristic 3, again in terms of certain \emph{absolute} invariants. From this we obtain a complete classification of isomorphism classes of supersingular, non-ordinary, and ordinary genus 2 curves defined over a finite field $\F_q$ with $q = 3^r$ in terms of absolute invariants, and their counts. In \Fref{sec:comp_results} we present and discuss further computational results for $p > 3$.

Let us briefly recall some background on the $p$-torsion structure of abelian varieties, moduli and fields of definition of genus 2 curves, and previous results for those curves over finite fields of characteristic 2 and 3.

\subsection{The $p$-rank and $a$-number of Abelian Varieties and Curves}
Fix a prime $p$, and an algebraically closed field $k$ of characteristic $p$ containing $\F_p$. Consider the \emph{additive group scheme} $\mathbb{G}_a = \Spec k[X]$ and the \emph{multiplicative group scheme} $\mathbb{G}_m = \Spec k[X,X^{-1}]$. The kernel of the relative Frobenius on these yields the finite group schemes $\alpha_p \cong \Spec k[X]/X^p$ and $\mu_p \cong \Spec k[X]/(X^p-1)$, respectively. The Cartier dual of $\alpha_p$ is itself, and the Cartier dual of $\mu_p$ is the constant group scheme $\Z/p\Z$.

Let $A/k$ be an abelian variety of genus $g$ and consider its $p$-torsion $A[p]$ as a group scheme. The \emph{$p$-rank} of $A$ is given by $f = \dim_{\F_p} \Hom(\mu_p, A[p])$. Similarly, the $a$-number of $A$ is given by $a = \dim_k \Hom(\alpha_p, A)$. Thus, geometrically, $A[p](k) \cong (\Z/p\Z)^f$. It holds that $0 \leq f \leq g$ and $1 \leq a+f \leq g$.

\begin{example}[\citet{pries2008}]
Let $A/k$ be of genus 2, i.e. an abelian surface, then we have the following possible types:
\begin{center}
\begin{tabular}{c|c|c|c|c}
$f$ & $a$ & $A[p]$ & Type & Codim. \\
\hline
$2$ & $0$ & $L^2$ & ordinary & 0 \\
$1$ & $1$ & $L \oplus I_{1,1}$ & non-ordinary & 1\\
$0$ & $1$ & $I_{2,1}$ & supersingular & 2 \\
$0$ & $2$ & $I_{1,1} \oplus I_{1,1}$ & superspecial & 3
\end{tabular}
\end{center}
Here $L = \Z/p\Z \oplus \mu_p$ is the $p$-torsion of an ordinary elliptic curve, whereas $I_{1,1}$ is the $p$-torsion of a supersingular elliptic curve. It is also the unique $BT_1$ group scheme of rank $p$ and fits into the following non-split exact sequence: $0 \to \alpha_p \to I_{1,1} \to \alpha_p \to 0$. Similarly, $I_{2,1}$ is the unique $BT_1$ group scheme of rank $p^2$ with $p$-rank 0 and $a$-number 1. Note that a superspecial abelian surface is in particular supersingular. The codimension of the associated strata in the full moduli space of abelian surfaces $\mathcal{A}_2$ is given as well.
\end{example}

We define the $p$-rank and the $a$-number of a genus g curve $C$ as the corresponding invariants of its Jacobian $\Jac(C)$ as an abelian variety.

Assume $\fchar(k) \neq 2$ and consider a genus $g$ hyperelliptic curve $C$ defined by an equation $y^2 = f(x)$ for $f(x) \in k[x]$ of degree $2g + 1$ or $2g + 2$. Let $c_i$ denote the coefficient of $x^i$ in the expansion of $f(x)^{(p-1)/2}$, and define for $\ell=0,\dots,g-1$ the $g \times g$ matrix $A_\ell$ with entries $(A_\ell)_{i,j} = (c_{ip-j})^{p^\ell}$.
Following \citet{yui1978} and heeding \citet{achterhowe2019} we call $A_0$ the Cartier-Manin matrix of the curve $C$ and define the matrix $M = A_{g-1} \cdots A_1 A_0$. Then we have the following lemma.
\begin{lemma}\label{lemma:prank_anumber}
Let $C$ be a genus g hyperelliptic curve defined by $y^2 = f(x)$.
\begin{enumerate}
\item The $p$-rank of $C$ is $f_C = \rank(M)$.
\item The $a$-number of $C$ is $a_C = g - \rank(A_0)$.
\end{enumerate}
\end{lemma}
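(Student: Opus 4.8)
The plan is to reduce both statements to a single concrete object: the Cartier operator $\mathcal{C}$ acting on the $g$-dimensional space of holomorphic differentials $H^0(C,\Omega^1)$, for which $\{\omega_j = x^{j-1}\,dx/y : j=1,\dots,g\}$ is a basis. Recall that $\mathcal{C}$ is additive, $p^{-1}$-semilinear (so $\mathcal{C}(\lambda^p\omega)=\lambda\,\mathcal{C}(\omega)$ for $\lambda\in k$), kills exact differentials, and satisfies $\mathcal{C}(x^m\,dx)=x^{(m+1)/p-1}\,dx$ when $p\mid m+1$ and $\mathcal{C}(x^m\,dx)=0$ otherwise. Under the Dieudonné-theoretic dictionary for $\Jac(C)[p]$ --- which is exactly where the conventions of \citet{achterhowe2019} must be respected --- the matrix of $\mathcal{C}$ in this basis is the Cartier-Manin matrix $A_0$, its kernel computes the $a$-number, and its stabilised rank computes the $p$-rank.

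First I would compute the matrix of $\mathcal{C}$. Writing $\omega_j = (y^{-1})^p\,x^{j-1}f(x)^{(p-1)/2}\,dx$ (using $y^{p-1}=f(x)^{(p-1)/2}$) and pulling the factor $(y^{-1})^p$ outside by semilinearity, the expansion $f(x)^{(p-1)/2}=\sum_i c_i x^i$ reduces everything to applying $\mathcal{C}$ to monomials. Only exponents $\equiv -1 \pmod p$ survive, and collecting the surviving terms gives $\mathcal{C}(\omega_j)=\sum_{k=1}^{g} c_{kp-j}\,\omega_k$; holomorphy of $\mathcal{C}(\omega_j)$ restricts $k$ to $1,\dots,g$. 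Hence $\mathcal{C}$ has matrix $A_0$ with $(A_0)_{k,j}=c_{kp-j}$, matching the definition. This step is pure bookkeeping once the action on $x^m\,dx$ is fixed.

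For part (2), I would invoke the identification $a_C=\dim_k\ker\mathcal{C}$, which follows from $a_C=\dim_k\Hom(\alpha_p,\Jac(C))$ and the Dieudonné-module description of $\Jac(C)[p]$. Since $k$ is perfect, both $\ker\mathcal{C}$ and $\im\mathcal{C}$ are $k$-subspaces and the induced map $H^0(\Omega^1)/\ker\mathcal{C}\to\im\mathcal{C}$ is a semilinear isomorphism, so rank--nullity gives $\dim_k\ker\mathcal{C}=g-\rank(A_0)$ and therefore $a_C=g-\rank(A_0)$. The case $g=1$ (supersingular $\Leftrightarrow\mathcal{C}=0\Leftrightarrow a=1$) is a useful sanity check.

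For part (1), the $p$-rank is the dimension of the part of $H^0(\Omega^1)$ on which $\mathcal{C}$ acts bijectively: a semilinear Fitting decomposition $H^0(\Omega^1)=V_{\mathrm{bij}}\oplus V_{\mathrm{nil}}$ shows $\dim_k V_{\mathrm{bij}}=\rank(\mathcal{C}^n)$ stabilises for $n\geq g$, and this multiplicative part is precisely what contributes $\Hom(\mu_p,\Jac(C)[p])$, so it equals $f_C$. Iterating $\mathcal{C}(\omega_j)=\sum_k(A_0)_{k,j}\,\omega_k$ shows that $\mathcal{C}^g$ has matrix $A_0\,A_0^{(1/p)}\cdots A_0^{(1/p^{g-1})}$; applying the entrywise Frobenius $x\mapsto x^{p^{g-1}}$, which preserves rank, turns this into $A_{g-1}\cdots A_1 A_0 = M$, so $f_C=\rank(\mathcal{C}^g)=\rank(M)$. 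The main obstacle is not the differential-forms computation but the careful translation among three languages --- the Cartier operator on differentials, the Frobenius/Verschiebung action on the Dieudonné module, and the group-scheme definitions of $f$ and $a$ via $\Hom(\mu_p,-)$ and $\Hom(\alpha_p,-)$ --- so that the semilinearity twists and the transpose/Frobenius conventions distinguishing the Cartier-Manin from the Hasse-Witt matrix all land correctly; the rank-stabilisation and the Frobenius clearing of denominators in $\mathcal{C}^g$ are the technical crux.
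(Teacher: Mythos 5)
The paper offers no proof of this lemma at all---it is quoted from \citet{yui1978} with the conventions of \citet{achterhowe2019}---and your argument via the Cartier operator on $H^0(C,\Omega^1)$ is exactly the standard one underlying those references, so the approach matches and the proof is correct. The only slip is notational: by $p^{-1}$-semilinearity the coefficients in $\mathcal{C}(\omega_j)=\sum_k c_{kp-j}^{1/p}\,\omega_k$ carry $p$-th roots, so the matrix of $\mathcal{C}$ in this basis is the entrywise $p$-th root of $A_0$ rather than $A_0$ itself; this shifts the Frobenius powers in your product formula for $\mathcal{C}^g$ by one, but since an entrywise field automorphism preserves rank it affects neither $\rank(A_0)$ nor $\rank(M)$, and both conclusions stand.
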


\subsection{Moduli Space and Invariants}\label{sec:moduli_and_invs}
Points in the coarse moduli space $\mathcal{M}_2$ of genus 2 curves up to isomorphism have various different descriptions. Consider the weighted projective space $S_k = \mathbb{P}(2,4,6,8,10)(k)$ over an arbitrary field $k$. \citet{igusa60} gave a set of invariants $[J_2:J_4:J_6:J_8:J_{10}] \in S_k$ associated to every genus 2 curve $C$ defined over $k$. Two curves are isomorphic over $k^\text{alg}$ if any only if their Igusa-invariants are the same in $S_k$. If $\fchar(k) \neq 2$, we can assume that $C$ is given by a model of the form $y^2 = f(x)$, where $f(x) \in k[x]$ is a polynomial of degree 5 or degree 6. Then $2^{12}J_{10}$ is simply the discriminant of $f(x)$.

For fields of characteristic different from 2, Cardona, Quer, Nart, and Pujol\`as \cite{cardonaquer2005, cardonanartpujolas2005} gave the absolute ``G2'' invariants 
\begin{equation}\label{eq:g2invariants}
(g_1,g_2,g_3) = \begin{cases}
\left(\frac{J_2^5}{J_{10}}, \frac{J_2^3 J_4}{J_{10}}, \frac{J_2^2J_6}{J_{10}}\right) &\text{if}\; J_2 \neq 0, \\
\left(0, \frac{J_4^5}{J_{10}^2}, \frac{J_4J_6}{J_{10}}\right) &\text{if}\; J_2 = 0, J_4 \neq 0, \\
\left(0, 0, \frac{J_6^5}{J_{10}^3}\right) &\text{if}\; J_2 = J_4 = 0.
\end{cases}
\end{equation}
Again, two curves are isomorphic over $k^\text{alg}$ if any only if their absolute invariants are the same. Hence, the $k$-points $\mathcal{M}_2(k)$ are in bijection with the set of points $(g_1,g_2,g_3) \in \mathbb{A}^3(k)$ as above.

\begin{remark}\label{rem:field_of_mod_def}
Consider a point of moduli $P = [C] \in \mathcal{M}_2(k)$ defined over $k$. Then \citet{mestre1991}, and \citet[Theorem 2]{cardonaquer2005} showed that it is only the generic case $\Aut(C) \cong C_2$ where there exists an obstruction to the point $P$ being represented by a curve $C'$ defined over $k$, which is an element in $\Br_2(k)$ (i.e. a non-trivial 2-torsion element). If the automorphism group $\Aut(C) \not\cong C_2$, then there always exists a curve $C'$ defined over $k$ with $C \cong C'$ (geometrically). Hence, if $k$ has trivial Brauer group (for example when $k$ is a finite field, or an algebraic closure thereof), then for every $(g_1,g_2,g_3) \in \mathbb{A}^3(k)$ there exists a curve $C$ defined over $k$ with those invariants.
\end{remark}

\subsection{Transformation of Sextic Forms}\label{sec:sextic_forms_transf}
In most generality, a genus 2 curve is given by an equation of the form $y^2 = f(x)$ with $f(x) = c_6x^6 + \cdots c_1 x + c_0$ for coefficients $c_0,\dots,c_6 \in k$. After homogenising, we find $F(X,Z) = c_6X^6 + \cdots c_1 XZ^5 + c_0Z^6$. Then the group $\GL_2$ acts on $F(X,Z)$ via the following transformation: Let $M = \begin{pmatrix} \alpha & \beta \\ \gamma & \delta \end{pmatrix} \in GL_2$ and let
\begin{align*}
X = \alpha X' + \beta Z', \\
Z = \gamma X' + \delta Z'.
\end{align*}
We find $F(X,Z) = c_6'X'^6 + \cdots c_1' X'Z'^5 + c_0'Z'^6 = G(X',Z')$ for new coefficients $c_0',\dots,c_6'$. Hence, we may call $F(X,Z)$ and $G(X',Z')$ to be $\GL_2$-conjugates. Let $g(x') = c_6'x'^6 + \cdots c_1' x' + c_0'$ be the dehomogenisation of $G(X',Z')$. Then the two curves $C: y^2 = f(x)$ and $C': y'^2 = g(x')$ are isomorphic via
\begin{equation*}
(x',y') \mapsto (x,y) = \left(\frac{\alpha x' + \beta}{\gamma x' + \delta}, \frac{y'}{(\gamma x' + \delta)^3}\right).
\end{equation*}
This is actually an if and only if relation: Two genus 2 curves $C,C'$ are isomorphic if any only if their associated sextic forms are conjugate under $\GL_2$-action. Thus we can also study points in the moduli space $\mathcal{M}_2$ by considering sextic forms up to $\GL_2$-conjugacy, see \citet{mestre1991}.

\subsection{Genus 2 Curves in Characteristic 2 and 3}\label{sec:prev_results}
\citet{maisner2007}, and \citet{howe2008} studied supersingular genus 2 curves in characteristic 2 and 3, respectively. \cite[Theorem 2.2]{howe2008} states that in characteristic 3, the coarse moduli space $\mathcal{S}_2$ of supersingular genus 2 curves is isomorphic to the affine line $\mathbb{A}^1$: The absolute invariants $(0,0,g_3)$ correspond to the curve $y^2 = x^6 + g_3^2x^3 + g_3^3 x + g_3^4$ if $g_3 \neq 0$ and the point $(0,0,0)$ corresponds to $y^2 = x^5 + 1$. Hence, over the finite field $\F_q$ where $q = 3^r$, the subspace $\mathcal{S}_2(\F_q)$ of $\mathcal{M}_2(\F_q)$ corresponding to isomorphism classes of supersingular genus 2 curves contains $q$ elements.

\section{Theoretical Results}\label{sec:theorems}
The goal of this section is to show that over the finite field $\F_q$ where $q = 3^r$, there are $q^2(q-1)$ many ordinary, $q(q-1)$ many non-ordinary, and $q$ many supersingular genus 2 curves.

\begin{theorem}\label{thm:nonord_classification}
Let $k$ be a finite field of characteristic $3$.
Let $C$ be a genus 2 curve defined over $k$ and let $(g_1,g_2,g_3)$ be its absolute invariants. Then $C$ is non-ordinary if and only if $g_1 = 0$, $g_2 \in k^\times$, and $g_3 \in k$.
\end{theorem}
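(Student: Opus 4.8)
The plan is to read the $p$-rank off the Cartier--Manin matrix supplied by \Fref{lemma:prank_anumber}, translate the resulting condition on the coefficients of $f$ into the vanishing of the Igusa invariant $J_2$, and then use Howe's description of the supersingular locus to peel off the supersingular case, leaving precisely the non-ordinary one. Throughout, both the $p$-rank and the absolute invariants depend only on the geometric isomorphism class of $C$, so every assertion may be checked after base change to $k^{\mathrm{alg}}$; this is also what lets me apply Howe's theorem, which is stated over an algebraically closed field.

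First I would specialise \Fref{lemma:prank_anumber} to $g = 2$ and $p = 3$. Since $(p-1)/2 = 1$, the numbers $c_i$ are simply the coefficients of $f(x) = c_6 x^6 + \cdots + c_0$, so the Cartier--Manin matrix is $A_0 = \left(\begin{smallmatrix} c_2 & c_1 \\ c_5 & c_4 \end{smallmatrix}\right)$ and $M = A_1 A_0$, where $A_1$ is the entrywise cube of $A_0$. Because the Frobenius is a ring homomorphism, $\det M = (\det A_0)^3 \cdot \det A_0 = (\det A_0)^4$, with $\det A_0 = c_2 c_4 - c_1 c_5$. Hence $\rank M = 2$ exactly when $\det A_0 \neq 0$, so $C$ is ordinary if and only if $\det A_0 \neq 0$, and otherwise $f_C \leq 1$.

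The crux is to identify $\{\det A_0 = 0\}$ with $\{J_2 = 0\}$ as loci in $\mathcal{M}_2$. Both are well defined there: the former is the isomorphism-invariant stratum $\{f_C \leq 1\}$, and the latter is invariant because $J_2$ transforms by a power of $\det$ under the $\GL_2$-action of \Fref{sec:sextic_forms_transf}. To match them I would compute the degree-$2$ invariant of the binary sextic; written with integral coefficients it is $120\,c_0 c_6 - 20\,c_1 c_5 + 8\,c_2 c_4 - 3\,c_3^2$, and reducing modulo $3$ yields $2(c_2 c_4 - c_1 c_5) = 2\det A_0$. Comparing this with the explicit characteristic-$3$ formula for $J_2$ in \cite{cardonanartpujolas2005} should confirm that $J_2$ is a nonzero scalar multiple of $\det A_0$ in this characteristic, whence $C$ is ordinary if and only if $J_2 \neq 0$, that is, if and only if $g_1 \neq 0$ by \eqref{eq:g2invariants}.

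Finally I would invoke \cite[Theorem 2.2]{howe2008}: a genus $2$ curve in characteristic $3$ is supersingular, equivalently $f_C = 0$, exactly when its absolute invariants are $(0,0,g_3)$, i.e. when $g_1 = g_2 = 0$ (equivalently $J_2 = J_4 = 0$). Combining this with the previous paragraph, $C$ is non-ordinary if and only if $f_C \leq 1$ but $f_C \neq 0$, which says $g_1 = 0$ together with $g_2 \neq 0$, the coordinate $g_3 \in k$ being unconstrained; this is the claimed characterisation. The step I expect to cost the most care is the middle one: one must be certain that Igusa's integral normalisation of $J_2$ does not acquire a spurious factor of $3$, so that its reduction is genuinely a nonzero multiple of $\det A_0$ rather than zero. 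I expect the explicit invariant formulas of \cite{cardonanartpujolas2005} to settle this point outright.
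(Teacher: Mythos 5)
Your argument is correct, but it takes a genuinely different route from the paper. The paper proceeds by brute force: it enumerates the seven possible rank-one Cartier--Manin matrices $A_0$, writes down the seven corresponding model types, shows they are all $\GL_2(k^\text{alg})$-equivalent, and then computes the Igusa invariants of the single normal form $dy^2 = x^6 + c_3x^3 + c_2x^2 + c_0$; the supersingular cases are excluded by solving $M = 0$ directly rather than by citing Howe. Your proof replaces all of this with two observations: that $\det M = (\det A_0)^4$, so ordinarity is governed by the single quantity $\det A_0 = c_2c_4 - c_1c_5$, and that this quantity coincides with $J_2$ up to a nonzero scalar in characteristic $3$. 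The delicate point you flag does check out: the unique (up to scaling) degree-$2$ invariant of the binary sextic is $-120\,c_0c_6 + 20\,c_1c_5 - 8\,c_2c_4 + 3\,c_3^2$ with $J_2$ equal to $1/8$ of it, and reducing modulo $3$ gives $J_2 \equiv 2(c_2c_4 - c_1c_5) = 2\det A_0$; alternatively, one can note that the space of degree-$2$ invariants remains one-dimensional over $\F_3$ and that $\det A_0$ is a nonzero relative invariant by the transformation law of the Cartier--Manin matrix, so no explicit formula is even needed. Your approach buys conceptual clarity and avoids the case analysis entirely, at the cost of outsourcing the $f=0$ case to \cite[Theorem 2.2]{howe2008} (which is legitimate, since the paper quotes and relies on that result elsewhere) and of not producing the explicit non-ordinary family $y^2 = x^6 + \sqrt[6]{1+g_2-g_2g_3}\,x^3 + \sqrt[3]{g_2}\,x^2 + 1$; that family is exactly what the paper's subsequent Corollary extracts from its proof, so if your argument were substituted the Corollary would need a separate (easy) verification via \Fref{rem:field_of_mod_def}.
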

\begin{proof}
We can assume that $C$ has a model over $k$ of the form $dy^2 = x^6 + c_5x^5 + c_4x^4 + c_3x^3 + c_2x^2 + c_1x + c_0$. Then the matrices $A_0$ and $A_1$ of $C$ are given by
\begin{equation*}
A_0 = \begin{pmatrix} c_2 & c_1 \\ c_5 & c_4 \end{pmatrix},\;
A_1 = \begin{pmatrix} c_2^3 & c_1^3 \\ c_5^3 & c_4^3 \end{pmatrix}.
\end{equation*}
Thus, the Hasse-Witt matrix $M$ of $C$ becomes
\begin{equation*}
M = A_1 A_0 = \begin{pmatrix} c_1^3 c_5 + c_2^4 & c_1^3 c_4 + c_1 c_2^3 \\ c_2 c_5^3 + c_4^3 c_5 & c_1 c_5^3 + c_4^4 \end{pmatrix}.
\end{equation*}

By \Fref{lemma:prank_anumber}, C has $p$-rank 1 if $\rank(M) = 1$. To make analysis easier, we can alternatively require $\rank(A_0) = 1$, and exclude the supersingular cases $dy^2 = x^6 + c_5x^5 + c_3x^3 + c_0$ and $dy^2 = x^6 + c_3x^3 + c_1x + c_0$ we find from the constraint $M = 0$ (again by \Fref{lemma:prank_anumber}). The remaining cases correspond to the following seven possible $A_0$ matrices:
\begin{equation*}
\begin{pmatrix} c_2 & 0\\ 0 & 0  \end{pmatrix},
\begin{pmatrix} 0 & 0\\ 0 & c_4  \end{pmatrix},
\begin{pmatrix} c_2 & 0\\ c_5 & 0  \end{pmatrix},
\begin{pmatrix} 0 & c_1\\ 0 & c_4  \end{pmatrix},
\begin{pmatrix} c_2 & 0\\ c_2 & 0  \end{pmatrix},
\begin{pmatrix} 0 & c_5\\ 0 & c_5  \end{pmatrix},
\begin{pmatrix} c_2 & c_5\\ c_2 & c_5  \end{pmatrix}.
\end{equation*}

Both approaches yield the following seven possible types of models:
\begin{align*}
dy^2 &= x^6 + c_3x^3 + c_2x^2 + c_0, \\
dy^2 &= x^6 + c_4x^4 + c_3x^3 + c_0, \\
dy^2 &= x^6 + c_5x^5 + c_3x^3 + c_2x^2 + c_0, \\
dy^2 &= x^6 + c_4x^4 + c_3x^3 + c_1x + c_0, \\
dy^2 &= x^6 + c_3x^3 + c_2(x^2 + x) + c_0, \\
dy^2 &= x^6 + c_5(x^5 + x^4) + c_3x^3 + c_0, \\
dy^2 &= x^6 + c_5(x^5 + x^4) + c_3x^3 + c_2(x^2 + x) + c_0.
\end{align*}

These types are all pairwise isomorphic over $k^\text{alg}$ either by simply switching affine patches via the transformation $x \mapsto 1/x$ followed by a rescaling of $y$, or by a more general $\GL_2(k^\text{alg})$ transformation of the associated sextic forms, see \Fref{sec:sextic_forms_transf}.

Hence, without loss of generality, assume that $C$ has a model $dy^2 = x^6 + c_3x^3 + c_2x^2 + c_0$. For its Igusa invariants we find $\left[ 0 : c_2^3 : -c_0^3 - c_0c_2^3 + c_3^6 : -c_2^6 : -c_0c_2^6 \right]$. Using \Fref{eq:g2invariants}, the absolute invariants are
\begin{equation*}
\left(0, \frac{c_2^3}{c_0^2}, \frac{c_0^3 + c_0c_2^3 - c_3^6}{c_0c_2^3} \right) = (0,g_2,g_3).
\end{equation*}
Since the discriminant $2^{12}J_{10} = -c_0c_2^6$ has to be non-zero for the model to define a smooth curve, we have that $c_0,c_2 \neq 0$ and so $g_2 \in k^\times, g_3 \in k$.

On the other hand, choose $g_2 \in k^\times, g_3 \in k$, and set $c_2 = \sqrt[3]{g_2}$, $c_3 = \sqrt[6]{1 + g_2 - g_2g_3}$. Then by \Fref{rem:field_of_mod_def} the curve $y^2 = x^6 + c_3x^3 + c_2x^2 + 1$ is geometrically isomorphic to a curve defined over $k$ with absolute invariants $(0,g_2,g_3)$, and is non-ordinary by \Fref{lemma:prank_anumber}.
\end{proof}

The proof of \Fref{thm:nonord_classification} immediately implies the following corollary.
\begin{corollary}
Let $k$ be a finite field of characteristic 3. For $g_2 \in k^\times$ and $g_3 \in k$, the curve
\begin{equation*}
y^2 = x^6 + \sqrt[6]{1 + g_2 - g_2g_3}x^3 + \sqrt[3]{g_2}x^2 + 1
\end{equation*}
defined over $k^\text{alg}$ is non-ordinary and has absolute invariants $(0,g_2,g_3)$.
\end{corollary}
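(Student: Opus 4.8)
The plan is to read the statement straight off the second half of the proof of \Fref{thm:nonord_classification}, where this very curve was produced; the only work is to make the geometric claims explicit rather than to invoke descent. First I would recall the invariant computation established there for the family $dy^2 = x^6 + c_3x^3 + c_2x^2 + c_0$, whose Igusa invariants are $\left[0 : c_2^3 : -c_0^3 - c_0c_2^3 + c_3^6 : -c_2^6 : -c_0c_2^6\right]$, and specialise to $d = c_0 = 1$. Setting $c_2 = \sqrt[3]{g_2}$ and $c_3 = \sqrt[6]{1 + g_2 - g_2g_3}$ as in the statement, I would verify by substitution that the absolute invariants come out as claimed: since $J_2 = 0$ and $J_4 = c_2^3 = g_2 \in k^\times$, the second branch of \Fref{eq:g2invariants} applies and gives $g_1 = 0$, the $g_2$-coordinate $c_2^3/c_0^2 = g_2$, and the $g_3$-coordinate
\begin{equation*}
\frac{c_0^3 + c_0c_2^3 - c_3^6}{c_0c_2^3} = \frac{1 + g_2 - (1 + g_2 - g_2g_3)}{g_2} = g_3.
\end{equation*}
The same condition $g_2 \in k^\times$ forces $J_{10} = -c_2^6 \neq 0$, so the model is smooth.

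Next I would establish non-ordinariness via \Fref{lemma:prank_anumber}. For this specific model the coefficients $c_1, c_4, c_5$ all vanish, so in the notation of the proof of \Fref{thm:nonord_classification} the matrix $A_0$ reduces to $\begin{pmatrix} c_2 & 0 \\ 0 & 0 \end{pmatrix}$ and the Hasse--Witt matrix to $M = A_1A_0 = \begin{pmatrix} c_2^4 & 0 \\ 0 & 0 \end{pmatrix}$. Because $c_2 = \sqrt[3]{g_2} \neq 0$, both matrices have rank $1$, whence $f_C = \rank(M) = 1$ (and incidentally $a_C = 2 - \rank(A_0) = 1$), so $C$ is non-ordinary of the expected type.

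Since the substitutions above are exactly those already carried out in the theorem, there is no computational obstacle; the one point that genuinely wants care is bookkeeping about the field of definition, which is also why the statement is phrased over $k^\text{alg}$. In characteristic $3$ the Frobenius $x \mapsto x^3$ is a bijection of $\F_q$, so $\sqrt[3]{g_2}$ is the unique cube root and lies in $k$; however $\sqrt[6]{1 + g_2 - g_2g_3} = \sqrt{\sqrt[3]{1 + g_2 - g_2g_3}}$ involves a square root that need not exist in $k$, so a priori $c_3 \in k^\text{alg} \setminus k$. Because the corollary asserts only the geometric properties — the $p$-rank and the absolute invariants, both invariant under isomorphism over $k^\text{alg}$ — no descent is required, and I would not need \Fref{rem:field_of_mod_def} here; that remark was invoked in \Fref{thm:nonord_classification} only to guarantee a model defined over $k$ itself.
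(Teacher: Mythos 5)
Your proposal is correct and follows the same route as the paper, which derives this corollary directly from the substitutions $c_0 = d = 1$, $c_2 = \sqrt[3]{g_2}$, $c_3 = \sqrt[6]{1+g_2-g_2g_3}$ already carried out at the end of the proof of the classification theorem; your invariant and Hasse--Witt computations match the paper's. Your added observation that the statement, being purely geometric, needs no appeal to the field-of-moduli remark is a correct (and slightly more careful) clarification rather than a different argument.
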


Combining \Fref{sec:prev_results} and \Fref{thm:nonord_classification} yields the following classification for the absolute invariants of an ordinary genus 2 curve over a finite field of characteristic 3.
\begin{corollary}
Let $k$ be a finite field of characteristic $3$. Every ordinary genus 2 curve defined over $k$ has absolute invariants $(g_1,g_2,g_3)$ with $g_1 \in k^\times$ and $g_2,g_3 \in k$.
\end{corollary}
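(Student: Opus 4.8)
The plan is to treat the $p$-rank as a geometric isomorphism invariant, so that it descends to a well-defined function on $\mathcal{M}_2(k)$, and to use that the loci $\{f=0\}$, $\{f=1\}$, $\{f=2\}$ partition this moduli space. Since by \Fref{sec:moduli_and_invs} the absolute invariants $(g_1,g_2,g_3)$ pin down the geometric isomorphism class, it suffices to show that the entire hyperplane $\{g_1=0\}$ is accounted for by the supersingular and non-ordinary strata, leaving the ordinary curves nowhere to sit except in $\{g_1 \in k^\times\}$. I would phrase the argument contrapositively: assuming $g_1=0$, I will show that $C$ cannot be ordinary.

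So let $C$ be a genus 2 curve over $k$ with absolute invariants $(g_1,g_2,g_3)$ and suppose $g_1 = 0$. Inspecting the case distinction in \Fref{eq:g2invariants}, the vanishing $g_1=0$ is exactly the condition $J_2=0$, and the invariants then read $(0,g_2,g_3)$ with either $g_2=0$ or $g_2\in k^\times$. If $g_2=0$, the invariants lie on the locus $(0,0,g_3)$, which by \cite[Theorem 2.2]{howe2008} as recalled in \Fref{sec:prev_results} is precisely the supersingular locus $\mathcal{S}_2$, so $f_C=0$. If instead $g_2\in k^\times$, the ``if'' direction of \Fref{thm:nonord_classification} gives that $C$ is non-ordinary, so $f_C=1$. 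In both cases $f_C \leq 1$, hence $C$ is not ordinary. Taking the contrapositive, an ordinary curve must have $g_1 \neq 0$, i.e.\ $g_1\in k^\times$, while $g_2,g_3$ remain arbitrary elements of $k$.

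The one point that needs care --- and the only place where something could go wrong --- is verifying that these two prior results genuinely cover all of $\{g_1=0\}$, without leaving an ordinary curve inside it. This holds because Howe's result exhausts the sublocus $(0,0,g_3)$ and \Fref{thm:nonord_classification} exhausts the complementary sublocus $(0,g_2,g_3)$ with $g_2\neq 0$, so together they fill the whole hyperplane; since the $p$-rank strata partition $\mathcal{M}_2(k)$, no ordinary curve can appear there. As a consistency check I would count over $\F_q$: the supersingular and non-ordinary strata contribute $q$ and $q(q-1)$ points, so the ordinary stratum contains $q^3 - q - q(q-1) = q^2(q-1)$ points, which both matches the target count and confirms that the ordinary locus is exactly $\{g_1\in\F_q^\times\}$.
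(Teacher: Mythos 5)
Your argument is correct and is essentially the paper's own: the paper proves this corollary by combining \cite[Theorem 2.2]{howe2008} (the supersingular locus is exactly $\{g_1=g_2=0\}$) with \Fref{thm:nonord_classification} (the non-ordinary locus is exactly $\{g_1=0,\,g_2\in k^\times\}$), so that the hyperplane $\{g_1=0\}$ is exhausted by non-ordinary and supersingular curves and an ordinary curve must have $g_1\in k^\times$. You simply spell out the case split and the partition argument that the paper leaves implicit.
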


For a finite field $k = \F_q$ with $q = p^r$, denote by $\mathcal{S}_2(k)$, $\mathcal{N}_2(k)$, and $\mathcal{O}_2(k)$ the supersingular, non-ordinary, and ordinary strata of the coarse moduli space $\mathcal{M}_2(k)$. Finally, we can use \cite[Theorem 2.2]{howe2008} and \Fref{thm:nonord_classification} to describe the sizes of these $p$-rank strata defined over $\F_q$ where $q = 3^r$.
\begin{theorem}\label{thm:strata_sizes}
Let $k$ be a finite field of characteristic 3 with $q$ elements. Then
\begin{enumerate}
\item $\#\mathcal{O}_2(k) = q^2(q-1)$,
\item $\#\mathcal{N}_2(k) = q(q-1)$,
\item $\#\mathcal{S}_2(k) = q$.
\end{enumerate}
\end{theorem}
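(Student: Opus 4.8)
The plan is to reduce everything to a counting argument on the affine space of absolute invariants. The key preliminary observation is that, by the discussion in \Fref{sec:moduli_and_invs} together with \Fref{rem:field_of_mod_def}, the $k$-points $\mathcal{M}_2(k)$ are in bijection with $\mathbb{A}^3(k)$ via $[C] \mapsto (g_1,g_2,g_3)$; since $k = \F_q$ has trivial Brauer group, every triple is realised by a curve defined over $k$, so that $\#\mathcal{M}_2(k) = q^3$. Because the $p$-rank of $C$ depends only on the geometric isomorphism class of $\Jac(C)$, the three strata $\mathcal{S}_2(k)$, $\mathcal{N}_2(k)$, $\mathcal{O}_2(k)$ (the loci of $p$-rank $0$, $1$, $2$ respectively) are well-defined on $\mathcal{M}_2(k)$ and partition it.

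For statement (iii) I would invoke \cite[Theorem 2.2]{howe2008} directly: the supersingular locus consists exactly of the invariants of the form $(0,0,g_3)$ with $g_3 \in k$, of which there are $q$. For statement (ii) I would apply \Fref{thm:nonord_classification}: the non-ordinary locus is precisely the set of triples $(0,g_2,g_3)$ with $g_2 \in k^\times$ and $g_3 \in k$, giving $(q-1)\cdot q = q(q-1)$ points.

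Statement (i) then follows by complementation. Since the three strata partition $\mathcal{M}_2(k)$ and the total count is $q^3$,
\[
\#\mathcal{O}_2(k) = q^3 - \#\mathcal{N}_2(k) - \#\mathcal{S}_2(k) = q^3 - q(q-1) - q = q^2(q-1).
\]
As a consistency check one could instead count directly from the preceding corollary, since the ordinary curves are exactly those with $g_1 \in k^\times$, yielding $(q-1)\cdot q \cdot q = q^2(q-1)$ independently of the subtraction.

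The argument is essentially bookkeeping once the two classification results are in hand, so I do not anticipate a serious obstacle. The one point requiring care is the bijection $\mathcal{M}_2(k) \cong \mathbb{A}^3(k)$ and hence the total $q^3$: this hinges on the triviality of the Brauer group of a finite field from \Fref{rem:field_of_mod_def}, which guarantees that each point of moduli is represented by an honest curve over $k$ and that no triple is omitted or double-counted. A secondary check is that the $g_1 = 0$ locus decomposes cleanly into a supersingular part ($g_2 = 0$) and a non-ordinary part ($g_2 \in k^\times$) with no overlap, which is immediate from comparing the two characterisations.
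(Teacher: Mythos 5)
Your proposal is correct and follows essentially the same route as the paper: the bijection $\mathcal{M}_2(k) \cong \mathbb{A}^3(k)$ gives the total count $q^3$, the supersingular and non-ordinary strata are counted via \cite[Theorem 2.2]{howe2008} and \Fref{thm:nonord_classification}, and the ordinary stratum follows by complementation. The consistency check via the corollary on ordinary invariants is a nice addition but not needed.
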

\begin{proof}
The discussion in \Fref{sec:moduli_and_invs} implies that there are $\#\mathbb{A}^3(k) = q^3$ many geometric isomorphism classes of curves defined over $k$. Then the sizes of $\mathcal{O}_2(k)$, $\mathcal{N}_2(k)$, and $\mathcal{S}_2(k)$ immediately follow from \cite[Theorem 2.2]{howe2008} and \Fref{thm:nonord_classification}.
\end{proof}

\section{Computational Results for Small Primes}\label{sec:comp_results}
We have computed the $p$-rank of a curve corresponding to each absolute invariant $(g_1,g_2,g_3) \in \mathbb{A}^3(\F_q)$ for small finite fields $\F_q$ of characteristic $2,3,5$, and 7. \Fref{table:strata_sizes_exact} summarises the results we have found.

\begin{table}
\centering
\begin{tabular}{c|c|c|c}
q & $\#\mathcal{S}_2(k)$ & $\#\mathcal{N}_2(k)$ & $\#\mathcal{O}_2(k)$ \\
\hline
2 & 2 & 2 & 4 \\
$2^2$ & $2^2$ & $2^2 \cdot 3$ & $2^4 \cdot 3$ \\
3 & 3 & $3 \cdot 2$ & $3^2 \cdot 2$ \\
$3^2$ & $3^2$ & $3^2 \cdot 8$ & $3^4 \cdot 8$ \\
$3^3$ &  $3^3$ & $3^3 \cdot 26$ & $3^6 \cdot 26$ \\
5 & 5 & $5 \cdot 4$ & $5^2 \cdot 4$ \\
$5^2$ & $5^2$ & $5^2 \cdot 24$ & $5^4 \cdot 24$ \\
7 & 7 & $7 \cdot 6$ & $7^2 \cdot 6$ \\
$7^2$ & $7^2$ & $7^2 \cdot 48$ & $7^4 \cdot 48$
\end{tabular}
\caption{Strata sizes of $\mathcal{M}_2$ over finite fields of characteristic $2,3,5$, and $7$.}\label{table:strata_sizes_exact}
\end{table}
As expected, in characteristic $2$, $\#\mathcal{S}_2$ follows the results of \citet{maisner2007}. Similary, characteristic $3$ behaves as predicted by \Fref{thm:strata_sizes}. Inspired by the computational results, \emph{we conjecture that this behaviour also holds in characteristic 5 and 7 and for the other strata in characteristic 2}.

Denote by $\Delta_0 = \#\mathcal{S}_2(k) - q$, $\Delta_1 = \#\mathcal{N}_2(k) - q(q-1)$, and $\Delta_2 = \#\mathcal{O}_2(k) - q^2(q-1)$ the difference in number of points to what one would expect if the size of the strata followed the small prime behaviour. We have computed \Fref{table:strata_sizes_small_char} determining the $p$-rank of a curve corresponding to each absolute invariant $(g_1,g_2,g_3) \in \mathbb{A}^3(\F_q)$ for finite fields $\F_q$ with $q > 7$.

\begin{table}
\centering
\begin{tabular}{c|c|c|c|c|c|c}
q & $\#\mathcal{S}_2(k)$ & $\Delta_0$ & $\#\mathcal{N}_2(k)$ & $\Delta_1$ & $\#\mathcal{O}_2(k)$ & $\Delta_2$ \\
\hline
11 & 9 & -2 & 101 & -9 & 1221 & 11 \\
$11^2$ & 117 & -4 & 14403 & -117 & 1757041 & 121 \\
13 & 20 & 7 & 149 & -7 & 2028 & 0 \\
$13^2$ & 330 & 161 & 28231 & -161 & 4798248 & 0 \\
17 & 25 & 8 & 264 & -8 & 4624 & 0 \\
19 & 26 & 7 & 335 & -7 & 6498 & 0 \\
23 & 36 & 13 & 494 & -12 & 11637 & -1 \\
29 & 49 & 20 & 851 & 39 & 23489 & -59 \\
31 & 54 & 23 & 970 & 40 & 28767 & -63 \\
37 & 102 & 65 & 1229 & -103 & 49322 & 38 \\
41 & 70 & 29 & 1794 & 154 & 67057 & -183 \\
47 & 109 & 62 & 2308 & 146 & 101406 & -208 \\
53 & 155 & 102 & 2843 & 87 & 145879 & -189 \\
61 & 186 & 125 & 3775 & 115 & 223020 & -240 \\
67 & 210 & 143 & 4093 & -329 & 296460 & 186 \\
71 & 146 & 75 & 5770 & 800 & 351995 & -875 \\
73 & 269 & 196 & 4949 & -307 & 383799 & 111 \\
79 & 216 & 137 & 6838 & 676 & 485985 & -813 \\
83 & 259 & 176 & 7529 & 723 & 563999 & -899 \\
89 & 226 & 137 & 9053 & 1221 & 695690 & -1358 \\
97 & 408 & 311 & 8726 & -586 & 903539 & 275 \\
101 & 347 & 246 & 11784 & 1684 & 1018170 & -1930 \\
103 & 443 & 340 & 11394 & 888 & 1080890 & -1228 \\
107 & 357 & 250 & 12443 & 1101 & 1212243 & -1351 \\
109 & 412 & 303 & 11750 & -22 & 1282867 & -281 \\
113 & 417 & 304 & 12834 & 178 & 1429646 & -482 \\
127 & 570 & 443 & 17592 & 1590 & 2030221 & -2033 \\
131 & 409 & 278 & 20931 & 3901 & 2226751 & -4179 \\
137 & 576 & 439 & 18198 & -434 & 2552579 & -5 \\
139 & 516 & 377 & 20745 & 1563 & 2664358 & -1940
\end{tabular}
\caption{Strata sizes and differences of $\mathcal{M}_2$ over finite fields of small characteristic.}\label{table:strata_sizes_small_char}
\end{table}

It would be interesting to determine how exactly the differences $\Delta_1$ and $\Delta_2$ of the non-ordinary and ordinary strata, respectively, depend on $q$. Alternatively we would like to say something about their distribution depending on the characteristic $p$. This seems to require some Sato-Tate style argument depending on the reduction of the moduli space $\mathcal{M}_2$ at various primes.

\subsection{The Supersingular Locus}
In this section we recall some theory about the size of the supersingular locus $\mathcal{S}_2$. \citet{ibukiyamakatsuraoort1986}, \citet{katsuraoort1987}, and \citet{koblitz1975} show that the supersingular locus of $\mathcal{M}_2$ is a union of projective lines with the singular points of this union corresponding to the superspecial points. Denote the finite set of superspecial points by $\mathcal{SP}_2$.

Let $B$ be the definite quaternion algebra over $\Q$ with discriminant $D$ and let $\mathcal{O}$ be a maximal order of $B$. Write $D = D_1 D_2$ for two positive integers $D_1, D_2$ and set $L_n(D_1,D_2)$ the set of left $\mathcal{O}$-lattices in $B^n$ which are equivalent to $(\mathcal{O} \otimes \Z_p)^n$ at $p$ if $p$ does not divide $D_2$, and otherwise to the other local equivalence class if $p$ divides $D_2$, see \citet{ibukiyamakatsuraoort1986}. Denote by $H_n(D_1,D_2)$ the number of global equivalence classes in $L_n(D_1,D_2)$. In particular, for a prime number $p$, denote by $H_n(p,1)$ the class number of the \emph{principal genus} and by $H_n(1,p)$ the class number of the \emph{non-principal genus}.

Let $k$ be a finite field of characteristic $p$. On the one hand, it is known that every principally polarised superspecial abelian surface $A$ over $k^\text{alg}$ arises from choosing a principal polarisation on a product $E \times E$ for a supersingular elliptic curve $E$. Let $B = \End(E) \otimes \Q$ the endomorphism algebra of $E$, which is the definite quaternion algebra ramified at $p$. Then the number of principally polarisations on $E \times E$ over $k^\text{alg}$ up to automorphisms is equal to $H_2(p,1)$ for $B$.

On the other hand, as a principally polarised abelian variety, either $A = E_1 \times E_2$ for two supersingular elliptic curves $E_1$ and $E_2$, or $A = \Jac(C)$ for a superspecial genus 2 curve $C$. Hence the number of superspecial genus 2 curves over $k^\text{alg}$ is given by $H_p = H_2(p,1) - h_p(h_p+1)/2$, where $h_p = H_1(p,1)$ is the number of isomorphism classes of supersingular elliptic curves over $k^\text{alg}$, see \citet[Corollary 2.12]{ibukiyamakatsuraoort1986}.

The number $H_p$ is finite; every superspecial genus 2 curve can be defined over $\F_{p^2}$. This allows us to determine the finite contribution of superspecial genus 2 curves to $\#\mathcal{S}_2(k)$ when $k$ contains $\F_{p^2}$. For small primes $p$ we find \Fref{table:ssing_sspec_sizes}; we have also determined the number of superspecial curves defined over $\F_p$.

\begin{table}
\centering
\begin{tabular}{c|c|c|c|c|c|c|c|c|c}
$p$ & 11 & 13 & 17 & 19 & 23 & 29 & 31 & 37 & 41 \\
\hline
$\#\mathcal{S}_2(\F_p)$ & 9 & 20 & 25 & 26 & 36 & 49 & 54 & 102 & 70 \\
$H_p$ & 2 & 3 & 5 & 7 & 10 & 18 & 20 & 31 & 40 \\
$\#\mathcal{SP}_2(\F_p)$ & 2 & 3 & 5 & 5 & 8 & 12 & 12 & 9 & 22
\end{tabular}
\caption{Number of $\F_p$ points of $\mathcal{S}_2$ and $\mathcal{SP}_2$ in small characteristic.}\label{table:ssing_sspec_sizes}
\end{table}

The number of irreducible components of the supersingular locus of $\mathcal{M}_2$, i.e. the aforementioned projective lines, is equal to the class number of the non-principal genus $H_2(1,p)$, see \citet[Theorem 5.7]{katsuraoort1987}.

\section*{Acknowledgements}
We thank Yan Bo Ti for helpful discussions and Everett Howe for helpful comments.

\printbibliography

\end{document}